\declaretheorem{theorem}
\declaretheorem[name=Lemma, sibling=theorem]{lemma}
\declaretheorem[name=Proposition, sibling=theorem]{proposition}
\declaretheorem[name=Corollary, sibling=theorem]{corollary}
\declaretheorem[name=Remark, style=definition, sibling=theorem]{remark}
\DeclareMathOperator{\HH}{H}
\newcommand{\kk}{{\mathbf{k}}}
\newcommand{\HHH}{{\mathbf{H}}}
\newcommand{\II}{{\mathcal{I}}}
\newcommand{\FI}{{\mathbf{FI}}}
\newcommand{\FB}{{\mathbf{FB}}}
\newcommand{\GL}{{\mathrm{GL}}}
\newcommand{\ttt}{{\mathbf{t}}}
\title{Linear stable range for homology of congruence subgroups via FI-modules}
\author{Wee Liang Gan}
\address{Department of Mathematics, University of California, Riverside, CA 92521, USA}
\email{wlgan@ucr.edu}
\author{Liping Li}
\address{LCSM (Ministry of Education), School of Mathematics and Statistics, Hunan Normal University, Changsha, Hunan 410081, China}
\email{lipingli@hunnu.edu.cn}
\thanks{L. Li is supported by the National Natural Science Foundation of China (Grant No. 11771135) and the Research Foundation of Education Bureau of Hunan Province, China (Grant No. 18A016). The authors thank the anonymous referee for carefully checking the manuscript and providing many valuable comments.}
\begin{document}

\begin{abstract}
We answer positively a question of Church, Miller, Nagpal and Reinhold on existence of a linear bound on the presentation degree of the homology of a complex of FI-modules. This implies a linear stable range for the homology of congruence subgroups of general linear groups.
\end{abstract}

\maketitle

\section*{Introduction}

Let $R$ be a ring. For any finite set $S$, let $R^S$ be the free right $R$-module on $S$ and let $\GL_S(R)$ be the group of right $R$-module automorphisms of $R^S$. For any two-sided ideal $I$ of $R$, the congruence subgroup $\GL_S(R, I)$ is defined to be the kernel of the natural group homomorphism $\GL_S(R) \to \GL_S(R/I)$. When $S$ is the finite set $[n]:=\{1,\ldots,n\}$, we denote $\GL_S(R, I)$ by $\GL_n(R,I)$.

Suppose $T$ is a finite set and $S\subset T$. The natural group monomorphism $\GL_S(R) \to \GL_T(R)$ restricts to a group monomorphism $\GL_S(R, I) \to \GL_T(R, I)$. Taking group homology with coefficients in an abelian group $A$, we get a group homomorphism $\HH_k(\GL_S(R,I); A) \to \HH_k(\GL_T(R,I); A)$. For each $n\geqslant 0$ and $N \geqslant 0$, there is a canonical homomorphism:
\[  \mathop{\mathrm{colim}}_{\substack{S\subset [n]\\ |S|\leqslant N }} \HH_k(\GL_S(R, I) ;A)
\longrightarrow \HH_k (\GL_n(R, I); A) \]
where the colimit is taken over the poset of all subsets $S$ of $[n]$ such that $|S|\leqslant N$.

In this article, we prove the following result:

\begin{theorem} \label{central stability}
Suppose that $R$ satisfies Bass's stable range condition $\mathrm{SR}_{d+2}$ for some $d\geqslant 0$ and $I$ is a proper two-sided ideal of $R$. Then for each $k\geqslant 0$ and $n\geqslant 0$, one has a canonical isomorphism:
\begin{equation} \label{colimit}
\mathop{\mathrm{colim}}_{\substack{S\subset [n]\\ |S|\leqslant \omega(k) }} \HH_k(\GL_S(R, I) ;A) \stackrel{\sim}{\longrightarrow}  \HH_k(\GL_n(R,I); A)
\end{equation}
where $\omega(k) = 4k+2d+6$.
\end{theorem}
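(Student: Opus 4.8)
The plan is to rephrase the left-hand side of \eqref{colimit} in terms of FI-modules and reduce the theorem to a bound on presentation degree. Fix $k\geqslant 0$ and the coefficient group $A$. The assignment $S\mapsto\HH_k(\GL_S(R,I);A)$ is functorial on $\FI$: an injection $S\hookrightarrow T$ induces $\GL_S(R)\hookrightarrow\GL_T(R)$ by letting a matrix act as the identity off the image, this restricts to $\GL_S(R,I)\hookrightarrow\GL_T(R,I)$, and $\HH_k(-;A)$ is a functor; in particular the symmetric groups act. We obtain an FI-module $V_k$ with $V_k(S)=\HH_k(\GL_S(R,I);A)$, and the canonical map in \eqref{colimit} is exactly the comparison map $\operatorname{colim}_{|S|\leqslant N}V_k(S)\to V_k([n])$ with $N=\omega(k)$. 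By the standard dictionary between FI-presentation degree and central stability (Church--Ellenberg--Farb; Church--Miller--Nagpal--Reinhold), this comparison map is an isomorphism for all $n$ once $V_k$ is generated in degrees $\leqslant N$ and related in degrees $\leqslant N$. So it suffices to show that the presentation degree of $V_k$ is at most $\omega(k)=4k+2d+6$.

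Second, following the reduction of Church--Miller--Nagpal--Reinhold, I would realise $V_k$ as the homology of a complex of free FI-modules whose terms have generation degree growing linearly in the homological index. Since $R$ satisfies $\mathrm{SR}_{d+2}$, a suitable complex of partial bases of $R^S$ decorated by the standard basis modulo $I$ is highly connected, the connectivity growing linearly in $|S|$ with slope $\tfrac{1}{2}$ and intercept governed by $d$ (van der Kallen's connectivity argument, adapted to the congruence setting). The group $\GL_S(R,I)$ acts on this complex with the $p$-simplices modulo $\GL_S(R,I)$ functorially identified with the injections $[p+1]\hookrightarrow S$, so the orbit chains form free FI-modules; and, for $S=[n]$, the stabiliser of the standard $p$-simplex is an extension of $\GL_{n-p-1}(R,I)$ by the abelian group of $(n-p-1)\times(p+1)$ matrices over $I$, whose homology contributes exterior-power-type FI-modules with linearly bounded generation degree. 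Running the equivariant homology spectral sequence and bookkeeping these affine-in-index generation degrees, one produces, for each $k$, a complex $C_\bullet$ of free FI-modules whose $i$-th term is generated in degrees bounded by an affine function of $i$ and with $\HH_k(C_\bullet)\cong V_k$.

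Third, I would apply the main result of the paper: the homology of a complex of FI-modules whose terms are presented in degrees linear in the index is itself presented in degrees linear in the index, with explicit constants. Applying this to $C_\bullet$ at homological degree $k$ and tracking the constants through the two reductions above yields that $V_k$ is presented in degrees $\leqslant 4k+2d+6$, which by the first step is precisely the isomorphism \eqref{colimit}. The genuine obstacle is entirely in this last ingredient. The naive strategy --- induct on homological degree, resolve each $\HH_i(C_\bullet)$ and splice --- yields only a \emph{quadratic} bound, because at each stage the relation degree of the homology just computed re-enters the generation estimate at the next stage, so the degree roughly doubles with $i$; this is exactly the obstruction Church--Miller--Nagpal--Reinhold could not remove. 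A genuinely linear bound instead requires a direct argument treating the complex $C_\bullet$ as a whole, and that is the technical core of the paper. Passing from that FI-module statement back to Theorem~\ref{central stability}, by contrast, is a matter of unwinding definitions and propagating the linear constants coming from van der Kallen's connectivity estimate.
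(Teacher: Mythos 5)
Your proposal matches the paper's proof: one reduces \eqref{colimit} to a presentation-degree bound on the FI-module $\HH_k(\GL(R,I);A)$ via Theorem~\ref{presentation}, invokes Proposition~\ref{bass} (Church--Miller--Nagpal--Reinhold) for the bound $\ttt_k(C_\bullet(\GL(R,I);A)) \leqslant 2k+d$ on the bar complex, and feeds this into Theorem~\ref{main theorem} to obtain presentation degree $\leqslant 4k+2d+6$, whence Theorem~\ref{presentation degree} and the result follow. The only cosmetic differences are that the paper cites \cite[Proposition~5.4]{cmnr} as a black box rather than re-sketching the partial-bases connectivity argument you outline, and phrases the input to Theorem~\ref{main theorem} as a bound on the FI-hyperhomology degree $\ttt_k$ rather than on the generation degrees of a quasi-isomorphic complex of free FI-modules (an equivalent formulation, since $\ttt_k$ is by definition computed from a Cartan--Eilenberg resolution).
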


We refer the reader to \cite[(V, 3.1)]{bass} for the definition of condition $\mathrm{SR}_{d+2}$. Bass proved in \cite[(V, 3.5)]{bass} that any commutative Noetherian ring of Krull dimension $d$ satisfies $\mathrm{SR}_{d+2}$. For example, any field satisfies $\mathrm{SR}_2$ and any Dedekind domain satisfies $\mathrm{SR}_3$.

It is trivial that \eqref{colimit} is an isomorphism when $n\leqslant \omega(k)$; the range $n > \omega(k)$ is called the \emph{stable range}. Qualitatively, Theorem \ref{central stability} says that there is a stable range starting at a linear function of $k$.

An excellent account of what was known about homology of the congruence subgroups is given in \cite{putman}. Let us recall the recent developments most relevant to our present article\footnote{See also \cite{calegari}, \cite{djament}, \cite{mpw}, \cite{ps} for recent related work.}:
\begin{enumerate}[(i)]
\item
Putman proved Theorem \ref{central stability} with stable range $n> (d+8)2^{k-1}-4$ under the assumption that $A$ is a field whose characteristic is either $0$ or at least $(d+8) 2^{k-1} - 3$; see \cite[Theorem B]{putman}.

\item
Church-Ellenberg-Farb-Nagpal proved Theorem \ref{central stability} without an explicit stable range in the special case where $R$ is the ring of integers of a number field and under the assumption that $A$ is a Noetherian ring; see \cite[Theorems C and D]{cefn}.

\item
Church-Ellenberg proved Theorem \ref{central stability} with stable range $n> 2^{k-2}(2d+9)-1$; see \cite[Theorem D']{ce}.

\item
Church-Miller-Nagpal-Reinhold proved Theorem \ref{central stability} with stable range $n> 4k^2 + (4d+10)k + 5d + 6$; see \cite[Application B]{cmnr}.

\end{enumerate}

Most importantly for us, Church-Miller-Nagpal-Reinhold reduced the proof of Theorem \ref{central stability} to a question on $\FI$-modules which may be of independent interest; see \cite[Question 5.2]{cmnr}. We give a positive answer to their question in Theorem \ref{main theorem}. Recently, Jeremy Miller and Jennifer Wilson applied our answer to this question to deduce a result similar to Theorem \ref{central stability} for cohomology groups of ordered configuration spaces; for details, see \cite{mw}.

This article is organized as follows. In Section \ref{generalities}, we recall the definitions and results on $\FI$-modules that we need. In Section \ref{main result}, we state and prove Theorem \ref{main theorem}. In Section \ref{application}, we give the application of Theorem \ref{main theorem} to the proof of Theorem \ref{central stability}.

We thank Thomas Church and Jeremy Miller for answering our questions on \cite{cmnr}. Jeremy Miller and Rohit Nagpal informed us that they have recently proved Theorem \ref{central stability} with a stable range starting at a linear function of $k$ in the special case where $R$ is the ring of integers of a number field. Their method of proof is completely different from the one we give here.

\section{Generalities} \label{generalities}

We refer the reader to \cite{ce} for background on $\FI$-modules; our notations will follow that of \cite{cmnr}.

We work over a commutative ring $\kk$. Let $\FI$ be the category of finite sets and injective maps. By an $\FI$-module (respectively, $\FI$-group), we mean a functor from $\FI$ to the category of $\kk$-modules (respectively, groups); see \cite{cef}. Let $\FB$ be the category of finite sets and bijective maps. An $\FB$-module is a functor from $\FB$ to the category of $\kk$-modules. Suppose $M$ is an $\FI$-module or $\FB$-module. We write $M_S$ for the value of $M$ on a finite set $S$. We write $M_n$ for the value of $M$ on $[n]$. If $M$ is nonzero, its degree $\deg M$ is defined to be $\sup\{ n \mid M_n\neq 0\}$; if $M$ is zero, we set $\deg M$ to be $-1$.

For any $\FB$-module $V$, the induced $\FI$-module $\II(V)$ is defined by
\[ \II(V)_S = \bigoplus_{n\geqslant 0} \kk[ \mathrm{Hom}_{\FI}([n], S) ] \otimes_{\kk[S_n]} V_n, \]
where $S_n$ denotes the symmetric group on $[n]$.  If $\alpha: S\to T$ is an injective map between finite sets, then $\alpha_*: \II(V)_S \to \II(V)_T$ is the $\kk$-linear map defined by
\[ \beta \otimes v \mapsto (\alpha\circ \beta) \otimes v \]
where $\beta\in \mathrm{Hom}_{\FI}([n], S)$ and $v \in V_n$ for any integer $n\geqslant 0$.
The functor $V \mapsto \II(V)$ is a left adjoint functor to the forgetful functor from the category of $\FI$-modules to the category of $\FB$-modules.
The projective $\FI$-modules are the induced $\FI$-modules $\II(V)$ where each $V_n$ is projective as a $\kk[S_n]$-module.

Suppose $V$ is an $\FB$-module. Then each $V_n$ can be regarded as an $\FB$-module whose value on a finite set $S$ is $V_S$ if $|S|=n$, zero if $|S|\neq n$. There is a direct sum decomposition $V=\bigoplus_{n\geqslant 0} V_n$ in the category of $\FB$-mnodules. Correspondingly, there is a direct sum decomposition
\[\II(V) = \bigoplus_{n\geqslant 0} \II(V_n)\]
in the category of $\FI$-modules.

Suppose $V$ and $W$ are $\FB$-modules. Since there are $\FI$-module direct sum decompositions
\[ \II(V)= \bigoplus_{n\geqslant 0} \II(V_n)\quad \mbox{ and }\quad \II(W)=\bigoplus_{m\geqslant 0} \II(W_m),\]
any $\FI$-module homomorphism $f: \II(V) \to \II(W)$ is given by a matrix $(f^{m,n})$ of $\FI$-module homomorphisms
\[ f^{m,n} : \II(V_n) \to \II(W_m). \]
If $m>n$, then $\II(W_m)_n=0$, so there is no nonzero homomorphism from $\II(V_n)$ to $\II(W_m)$. Thus, the matrix $(f^{m,n})$ is upper-triangular.

For any $\FI$-module $M$ and finite set $S$, we write $M_{\prec S}$ for the $\FI$-submodule of $M$ generated by all $M_T$ with $|T|<|S|$. Define an $\FI$-module $\HH_0^{\FI}(M)$ by
\[ \HH_0^{\FI}(M)_S  =  (M/M_{\prec S})_S. \]
The functor $\HH_0^{\FI}$ is right exact; let $\HH_i^{\FI}$ be its $i$-th left derived functor. For any $\FI$-module $M$, denote by $t_i(M)$ the degree $\deg \HH_i^\FI (M)$. We call $\max\{ t_0(M), t_1(M) \}$ the \emph{presentation degree} of $M$.

If $V$ is an $\FB$-module, then it is plain that $\HH_0^\FI(\II(V)) = V$ and $t_0(\II(V))= \deg V$.

The following theorem is due to Church-Ellenberg \cite[Theorem C]{ce}; see \cite{gl} for a very simple proof.

\begin{theorem}[Church-Ellenberg]  \label{presentation}
Let $M$ be an $\FI$-module. If the presentation degree of $M$ is at most $N$, then there is a canonical isomorphism
\[\mathop{\mathrm{colim}}_{\substack{S\subset [n]\\ |S|\leqslant N }} M_S
\stackrel{\sim}{\longrightarrow}  M_n \quad \mbox{ for every } n\geqslant 0. \]
\end{theorem}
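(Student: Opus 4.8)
The plan is to exhibit the canonical map in the statement as the value at $[n]$ of a counit of adjunction, and then to prove this counit is an isomorphism of $\FI$-modules using only that $M$ has presentation degree $\leqslant N$. Fix $N$ and let $\FI_{\leqslant N}\subseteq\FI$ be the full subcategory on finite sets of cardinality $\leqslant N$. Restriction along the inclusion is an exact functor $\rho$ from $\FI$-modules to $\FI_{\leqslant N}$-modules; it has a left adjoint $L$ (left Kan extension), which is therefore right exact, and I will write $\varepsilon\colon L\rho\to\mathrm{id}$ for the counit. The pointwise formula for $L$ computes $(L\rho M)_U$ as the colimit of $M$ over the comma category of injections $[m]\hookrightarrow U$ with $m\leqslant N$. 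The poset of subsets $S\subseteq U$ with $|S|\leqslant N$ maps into this comma category ($S\mapsto$ the inclusion $S\hookrightarrow U$) cofinally: for a fixed injection $f$, the subsets admitting a (unique) map to $f$ are exactly those contained in $\operatorname{im}f$, a poset with smallest element $\operatorname{im}f$, hence connected. Thus $(L\rho M)_U\cong\operatorname{colim}_{S\subseteq U,\,|S|\leqslant N}M_S$, and $\varepsilon_M$ evaluated at $[n]$ is the map in the theorem. So it suffices to prove that $\varepsilon_M$ is an isomorphism when $M$ has presentation degree $\leqslant N$.

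The second step treats induced modules. If $V$ is an $\FB$-module with $\deg V\leqslant N$, then $\II(V)_U$ involves only the $V_n$ with $n\leqslant|U|$, so $\rho\,\II(V)$ is the induced $\FI_{\leqslant N}$-module on $V$; applying $L$ and using transitivity of left Kan extension along $\FB_{\leqslant N}\hookrightarrow\FI_{\leqslant N}\hookrightarrow\FI$ together with $\deg V\leqslant N$ yields $L\rho\,\II(V)\cong\II(V)$, and one checks this isomorphism coincides with $\varepsilon_{\II(V)}$. Hence $\varepsilon$ is an isomorphism on every induced $\FI$-module generated in degrees $\leqslant N$. (Intuitively $L\rho$ rebuilds a module from its part in degrees $\leqslant N$, and such an induced module is precisely the free thing on that part.)

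The third step provides a presentation by such modules and concludes via the five lemma. Since $t_0(M)\leqslant N$, pick a surjection $P_0=\II(V_0)\twoheadrightarrow M$ with $\deg V_0\leqslant N$, and let $K$ be its kernel. Every induced $\FI$-module is $\HH^\FI_\bullet$-acyclic — resolve the $\FB$-module by $\FB$-projectives and apply the exact functor $\II$ — so the long exact sequence of $\HH^\FI_\bullet$ attached to $0\to K\to P_0\to M\to 0$ exhibits $\HH_0^\FI(K)$ as an extension of a submodule of $\HH_0^\FI(P_0)$ by $\HH_1^\FI(M)$; both pieces have degree $\leqslant N$, so $t_0(K)\leqslant N$, and therefore $K$ is generated in degrees $\leqslant N$. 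Choosing a surjection $P_1=\II(V_1)\twoheadrightarrow K$ with $\deg V_1\leqslant N$ gives an exact sequence $P_1\to P_0\to M\to 0$; applying the right exact functor $L\rho$, comparing with the original sequence via $\varepsilon$, and using that the two leftmost vertical maps are isomorphisms by the second step, the five lemma forces $\varepsilon_M$ to be an isomorphism. I expect the main obstacle to be this last step: propagating the relations bound $t_1(M)\leqslant N$ to the statement that the syzygy module $K$ is generated in degrees $\leqslant N$, which is where the hypothesis is genuinely used and which relies on the $\HH^\FI_\bullet$-acyclicity of induced modules; the cofinal identification of the colimit in the first step is the other point requiring some care.
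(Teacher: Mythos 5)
The paper does not prove Theorem~\ref{presentation}; it cites Church--Ellenberg \cite[Theorem C]{ce} and points to \cite{gl} for a short proof, so your argument must be judged on its own, and against the cited source rather than the present article. Your proof is correct, and it follows the same general strategy as \cite{gl}, recast in the language of Kan extensions: identify the colimit with the value at $[n]$ of the left Kan extension $L$ along $\FI_{\leqslant N}\hookrightarrow\FI$ applied to the restriction $\rho M$ (your cofinality argument for the subset poset inside the comma category is right, since a comma object $(V,g)$ is dominated by the poset of subsets containing $g(V)$, which has least element $g(V)$ of cardinality $\leqslant N$); observe that the counit $\varepsilon\colon L\rho\to\mathrm{id}$ is an isomorphism on induced modules generated in degrees $\leqslant N$; then build a presentation $P_1\to P_0\to M\to 0$ by such modules and finish using right exactness of $L\rho$. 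The two points you flag as delicate are indeed the crux and are both handled correctly, but let me suggest cleaner phrasings. For the identification of the transitivity isomorphism with $\varepsilon_{\II(V)}$: since $\FI_{\leqslant N}\hookrightarrow\FI$ is fully faithful, $L$ is fully faithful, so $\varepsilon_M$ is an isomorphism if and only if $M$ lies in the essential image of $L$; your computation $L\rho\,\II(V)\cong\II(V)$ (for $\deg V\leqslant N$) then makes this automatic without a separate diagram check. For the bound $t_0(K)\leqslant N$ on the syzygy $K=\ker(P_0\twoheadrightarrow M)$: your long exact sequence argument, using that induced modules are $\HH_i^{\FI}$-acyclic for $i>0$ (which does hold, by the exactness of $\II$ and the computation $\HH_0^{\FI}\circ\II=\mathrm{id}$), correctly isolates this as the place where $t_1(M)\leqslant N$ is used. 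You should also note explicitly that $t_0(M)\leqslant N$ lets you choose $P_0=\II(V_0)$ with $\deg V_0\leqslant N$ surjecting onto $M$ (take $V_0=\bigoplus_{n\leqslant N}M_n$); this is routine but is where the other half of the hypothesis enters.
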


The next theorem is also due to Church-Ellenberg \cite[Theorem A]{ce}; alternative proofs can be found in \cite{church}, \cite{gan} and \cite{li}.

\begin{theorem}[Church-Ellenberg] \label{regularity bound}
Let $M$ be an $\FI$-module. Then for each $i\geqslant 2$, one has:
\[ t_i(M) \leqslant t_0(M) + t_1(M) + i - 1. \]
\end{theorem}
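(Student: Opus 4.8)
The plan is to set up an induction, using the shift functor $\Sigma$ and its behaviour on induced modules. Recall $(\Sigma M)_S = M_{S \sqcup \{\ast\}}$; this functor is exact, carries projectives to projectives, and carries $\II(V)$ to $\II(V) \oplus \II(V')$ with $\deg V' \leqslant \deg V - 1$, the inclusion of the first summand being the natural transformation $\eta_{\II(V)} \colon \II(V) \to \Sigma\,\II(V)$. The first thing I would record is that $M$ admits a projective resolution $P_\bullet \to M$ with $t_0(P_i) = t_i(M)$ for all $i$: one takes $P_0 = \II(\HH_0^\FI M)$, notes that the first syzygy $K = \ker(P_0 \to M)$ satisfies $\HH_1^\FI(M) \cong \HH_0^\FI(K)$ and $\HH_i^\FI(M) \cong \HH_{i-1}^\FI(K)$ for $i \geqslant 2$, and iterates. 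In particular, writing $K^{(j)}$ for the $j$-th iterated first syzygy, one gets $t_i(M) = t_1(K^{(i-1)})$ for $i \geqslant 2$, with $t_0(K^{(i-1)}) = t_{i-1}(M)$ and $K^{(i-1)} \subseteq \II(W)$ for an $\FB$-module $W$ with $\deg W = t_{i-2}(M)$; so the whole statement would follow from a sufficiently sharp bound on $t_1$ of a submodule of an induced module.

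More robustly, I would apply $\eta$ to the resolution $P_\bullet$ to get a short exact sequence of complexes of projectives $0 \to P_\bullet \to \Sigma P_\bullet \to D_\bullet \to 0$, where $D_\bullet := \operatorname{coker}\eta_{P_\bullet}$ has, by the snake lemma, homology $\operatorname{coker}\eta_M$ in degree $0$, the torsion kernel $\ker\eta_M$ in degree $1$, and zero above. Comparing the two hyperhomology spectral sequences of $D_\bullet$ against $\HH_0^\FI$ produces a long exact sequence
\[ \cdots \to \mathbb{H}_n(D_\bullet) \to \HH_n^\FI(\operatorname{coker}\eta_M) \to \HH_{n-2}^\FI(\ker\eta_M) \to \mathbb{H}_{n-1}(D_\bullet) \to \cdots \]
with $\deg\mathbb{H}_n(D_\bullet) \leqslant t_0(D_n) \leqslant t_n(M) - 1$; specializing to $n = 0, 1$ gives $t_0(\operatorname{coker}\eta_M) \leqslant t_0(M) - 1$ and $t_1(\operatorname{coker}\eta_M) \leqslant t_1(M) - 1$. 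Applying $\HH_\ast^\FI$ to the short exact sequence of complexes gives a second long exact sequence relating $\HH_\ast^\FI(M)$, $\HH_\ast^\FI(\Sigma M)$, and $\mathbb{H}_\ast(D_\bullet)$, which together with the elementary bound $t_i(\Sigma M) \leqslant t_i(M)$ and the short exact sequences $0 \to \ker\eta_M \to M \to \operatorname{im}\eta_M \to 0$ and $0 \to \operatorname{im}\eta_M \to \Sigma M \to \operatorname{coker}\eta_M \to 0$ would express the invariants of $M$ through those of $\operatorname{coker}\eta_M$ (which has strictly smaller generation degree and strictly smaller $t_1$) and of the torsion kernel. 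This is the raw material for an induction on $t_0(M)$, with base case $M = 0$.

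The main obstacle is forcing the constants to telescope to exactly $i - 1$. The naive estimates have a Fibonacci-type defect --- feeding the syzygy bound back in gives growth like $t_{i-1}(M) + t_{i-2}(M)$ rather than $t_{i-1}(M) + 1$, and the appearance of $\Sigma M$ (whose invariants are not a priori smaller than those of $M$) in the second long exact sequence obstructs a direct induction --- so one needs a genuinely sharper input, of the shape $t_1(N) \leqslant \max\{\deg V, t_0(N)\}$ up to a bounded error for submodules $N \subseteq \II(V)$, which I would extract from a careful analysis of the upper-triangular structure of homomorphisms between induced modules recalled in Section \ref{generalities}. The other delicate point is the torsion: one must control $\HH_\ast^\FI(\ker\eta_M)$, and more generally the torsion submodule of $M$, which should be concentrated in degrees $\leqslant t_0(M) + t_1(M)$. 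I would handle this by first proving the theorem for torsion-free modules, where $\eta_M$ is injective, $D_\bullet$ actually resolves $\operatorname{coker}\eta_M$, and the machinery collapses to a clean long exact sequence $\cdots \to \HH_i^\FI(M) \to \HH_i^\FI(\Sigma M) \to \HH_i^\FI(\operatorname{coker}\eta_M) \to \HH_{i-1}^\FI(M) \to \cdots$, and then splicing the torsion back in via $0 \to M_{\mathrm{tors}} \to M \to M/M_{\mathrm{tors}} \to 0$. I expect the bookkeeping around these two points to be the heart of the matter; everything else is formal manipulation with the shift functor.
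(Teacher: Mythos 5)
The paper itself does not prove this theorem; it is quoted from Church--Ellenberg \cite[Theorem A]{ce}, with alternative proofs cited in \cite{church}, \cite{gan}, and \cite{li}. So there is no in-paper argument to compare against, and I will assess your sketch on its own terms.

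Your outline assembles the correct machinery --- the shift functor $\Sigma$ and its splitting $\Sigma\,\II(V)\cong\II(V)\oplus\II(V')$ with $\deg V'\leqslant\deg V-1$, the natural transformation $\eta\colon\mathrm{id}\to\Sigma$, the short exact sequence of projective complexes $0\to P_\bullet\to\Sigma P_\bullet\to D_\bullet\to 0$, and the comparison of hyperhomology long exact sequences --- and this is indeed the skeleton of the alternative proof in \cite{gan}. But you have identified, and explicitly left open, the two points that constitute the actual mathematical content of the theorem, so what you have written is a plan rather than a proof. First, the telescoping: you concede that the naive recursion produces growth of the form $t_{i-1}(M)+t_{i-2}(M)$ and that a ``genuinely sharper input'' is needed to get $+1$ per homological degree. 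The missing ingredient is not a generic bound $t_1(N)\leqslant\max\{\deg V, t_0(N)\}+O(1)$ for submodules $N\subseteq\II(V)$ (which you posit but do not prove, and which on its own does not obviously close the induction); it is a \emph{degree shift} in the connecting homomorphism of the long exact sequence arising from $0\to P_\bullet\to\Sigma P_\bullet\to D_\bullet\to 0$. Because $\Sigma$ moves the internal grading by one, the boundary map lands in $\HH_{a-1}^{\FI}(M)$ one internal degree lower, and it is this shift --- not bookkeeping with $t_0(D_n)\leqslant t_n(M)-1$ --- that makes the constants add up to $i-1$ rather than blow up. You never isolate this phenomenon. Second, the torsion: your plan to first prove the result for torsion-free modules and then ``splice the torsion back in'' using the claim that $M_{\mathrm{tors}}$ lives in degrees $\leqslant t_0(M)+t_1(M)$ is circular as stated, since that torsion bound is essentially a reformulation of the regularity theorem you are trying to prove. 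In the cited proofs, the control of torsion (local cohomology degrees) and the bounds on $t_i$ are established by one and the same induction, not one from the other. You have correctly named ``the heart of the matter,'' but you have not reached it; the proposal is a plausible opening, not a proof.
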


From Theorem \ref{regularity bound}, it is easy to deduce the following result.

\begin{corollary} \label{key lemma}
Let $P$ and $Q$ be projective $\FI$-modules. Let $f : P \to Q$ be a homomorphism and $Z = \ker(f)$. Then for each $i\geqslant 0$, one has:
\[ t_i(Z) \leqslant 2t_0(P) + i + 1. \]
\end{corollary}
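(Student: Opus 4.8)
The plan is to deduce the bound from Theorem~\ref{regularity bound} applied to $C=\operatorname{coker}(f)$, after a preliminary reduction that keeps $t_0(Q)$ under control. First I would reduce to the case $t_0(Q)\leqslant t_0(P)$. Since every projective $\FI$-module is induced, write $P=\bigoplus_n\II(V_n)$ and $Q=\bigoplus_m\II(W_m)$ as in Section~\ref{generalities}. By the upper-triangularity of the matrix $(f^{m,n})$ recalled above, $f$ carries $\II(V_n)$ into $\bigoplus_{m\leqslant n}\II(W_m)$, so $\operatorname{im}(f)$ lies inside the projective direct summand $Q'=\bigoplus_{m\leqslant t_0(P)}\II(W_m)$ of $Q$, and $t_0(Q')\leqslant t_0(P)$. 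Replacing $Q$ by $Q'$ affects neither $\operatorname{im}(f)$ nor $Z=\ker(f)$, so I may assume $t_0(Q)\leqslant t_0(P)$.

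Next, set $Y=\operatorname{im}(f)$ and consider the short exact sequences $0\to Z\to P\to Y\to0$ and $0\to Y\to Q\to C\to0$. Because $P$ and $Q$ are projective, $\HH^{\FI}_i(P)=\HH^{\FI}_i(Q)=0$ for $i\geqslant1$, so the long exact sequences of $\HH^{\FI}_\bullet$ provide isomorphisms $\HH^{\FI}_i(Z)\cong\HH^{\FI}_{i+1}(Y)\cong\HH^{\FI}_{i+2}(C)$ for all $i\geqslant1$, an injection $\HH^{\FI}_1(C)\hookrightarrow\HH^{\FI}_0(Y)$, and a left-exact sequence $0\to\HH^{\FI}_1(Y)\to\HH^{\FI}_0(Z)\to\HH^{\FI}_0(P)$. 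Since $\HH^{\FI}_0$ is right exact, $t_0(Y)\leqslant t_0(P)$ and $t_0(C)\leqslant t_0(Q)\leqslant t_0(P)$; combining with the displayed maps,
\[ t_1(C)\leqslant t_0(Y)\leqslant t_0(P),\qquad t_i(Z)=t_{i+2}(C)\ \ (i\geqslant 1),\qquad t_0(Z)\leqslant\max\{t_0(P),\,t_2(C)\}. \]

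Finally I would apply Theorem~\ref{regularity bound} to $C$: for $j\geqslant2$, $t_j(C)\leqslant t_0(C)+t_1(C)+j-1\leqslant 2t_0(P)+j-1$. Taking $j=i+2$ gives $t_i(Z)\leqslant 2t_0(P)+i+1$ for $i\geqslant1$, while $j=2$ gives $t_0(Z)\leqslant\max\{t_0(P),\,2t_0(P)+1\}=2t_0(P)+1$ (the case $P=0$ being trivial). This is the asserted inequality for every $i\geqslant0$.

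The computation is short, and the only step that is not purely formal is the reduction in the first paragraph. The point I expect to need care — and the real idea of the argument — is to route through $C=\operatorname{coker}(f)$ rather than merely through $\operatorname{im}(f)$, and to use the upper-triangularity of maps between induced modules to shrink $Q$ so that $t_0(C)$ stays bounded by $t_0(P)$; once that is arranged, the dimension-shifting isomorphisms and the regularity bound finish the proof with only bookkeeping of indices.
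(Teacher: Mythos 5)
Your proof is correct and follows essentially the same route as the paper's: reduce to $t_0(Q)\leqslant t_0(P)$ via upper-triangularity, dimension-shift through the two short exact sequences $0\to Z\to P\to \operatorname{im}(f)\to 0$ and $0\to\operatorname{im}(f)\to Q\to C\to 0$, and apply Theorem~\ref{regularity bound} to $C$. The only cosmetic difference is that the paper also dispatches the degenerate case $t_0(P)=\infty$ up front, which your argument handles implicitly.
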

\begin{proof}
The claim is trivial if $t_0(P)=\infty$, so suppose $t_0(P)<\infty$. Let $N=t_0(P)$.

There are  $\FB$-modules $V$ and $W$ such that $P=\II(V)=\bigoplus_{n\geqslant 0} \II(V_n)$ and $Q = \II(W)= \bigoplus_{m\geqslant 0} \II(W_m)$. Since the matrix $(f^{m,n})$ is upper-triangular and $V_n=0$ for every $n>N$, the image of $f$ is contained in $\bigoplus_{m \leqslant N} \II(W_m)$. Therefore, replacing $Q$ by $\bigoplus_{m\leqslant N} \II(W_m)$, we may assume that $t_0(Q)\leqslant N$.

From the short exact sequence $0 \to Z \to P \to P/Z \to 0$, we deduce that:
\begin{gather*}
t_0(Z) \leqslant \max\{ t_0(P), t_1(P/Z) \} ,\\
t_i(Z) = t_{i+1}(P/Z) \;\mbox{ if } i\geqslant 1.
\end{gather*}
Let $C$ be the cokernel of $f$. From the short exact sequence $0\to P/Z \to Q \to C \to 0$, we deduce that $t_{i+1}(P/Z) = t_{i+2}(C)$; moreover, $t_0(C)\leqslant t_0(Q)$ and $t_1(C)\leqslant t_0(P/Z) \leqslant t_0(P)$. Applying Theorem \ref{regularity bound}, we have:
\[ t_{i+2}(C) \leqslant t_0(C)+t_1(C) + i+1 \leqslant t_0(Q) + t_0(P) + i+1 \leqslant 2N+i+1. \]
Putting the above inequalities together gives the corollary.
\end{proof}

We write $\HHH_k^\FI$ for the $k$-th left hyper-derived functor of $\HH_0^\FI$. If $M_\bullet$ is a bounded-below chain complex of $\FI$-modules and $P_\bullet$ is a complex of projective (or induced\footnote{Induced $\FI$-modules are $\FI$-homology acyclic; see \cite[Theorem 1.3]{ly} or \cite[Theorem B]{ramos}.}) $\FI$-modules quasi-isomorphic to $M_\bullet$, then $\HHH_k^\FI (M_\bullet)$ is, by definition, equal to the $k$-th homology of the chain complex $ \HH_0^\FI(P_\bullet) $; denote by $\ttt_k(M_\bullet)$ the degree $\deg \HHH_k^\FI (M_\bullet)$.

\section{Bounding presentation degree of homology} \label{main result}

Let $M_\bullet$ be a complex of $\FI$-modules supported on non-negative homological degrees; its $k$-th homology $\HH_k(M_\bullet)$ is an $\FI$-module. The following theorem gives a positive answer to \cite[Question 5.2]{cmnr}.

\begin{theorem} \label{main theorem}
For each $k\geqslant 0$, one has:
\begin{align*}
t_0( \HH_k(M_\bullet ) ) &\leqslant 2 \ttt_k(M_\bullet) + 1,\\
t_1( \HH_k(M_\bullet )) &\leqslant  2 \max\{ \ttt_k(M_\bullet), \ttt_{k+1}(M_\bullet) \} + 2.
\end{align*}
In particular, if $a\geqslant 0$ and $\ttt_k(M_\bullet) \leqslant ak+b$ for every $k$, then one has:
\begin{align*}
t_0( \HH_k(M_\bullet ) ) &\leqslant 2ak + 2b + 1,\\
t_1( \HH_k(M_\bullet )) &\leqslant  2ak + 2a + 2b + 2.
\end{align*}
\end{theorem}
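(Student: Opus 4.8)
The plan is to take a complex $P_\bullet$ of projective $\FI$-modules quasi-isomorphic to $M_\bullet$, so that by definition $\ttt_k(M_\bullet)$ is the degree of the $k$-th homology of the complex $\HH_0^\FI(P_\bullet)$, and then to compare the actual homology $\HH_k(M_\bullet) = \HH_k(P_\bullet)$ against the cycles, boundaries, and the homology of $\HH_0^\FI(P_\bullet)$. Write $Z_k = \ker(P_k \to P_{k-1})$ and $B_k = \mathrm{im}(P_{k+1} \to P_k)$, so there is a short exact sequence $0 \to B_k \to Z_k \to \HH_k(M_\bullet) \to 0$. The key input is Corollary \ref{key lemma}: since $P_{k+1}$ and $P_k$ are projective and $B_k$ is the image (hence $Z_{k}':=\ker(P_{k+1}\to P_k)$ has controlled $t_i$), one gets bounds on $t_i(B_k)$ and $t_i(Z_k)$ of the shape $2t_0(P_{\bullet}) + i + 1$. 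But $t_0(P_k)$ is not directly controlled by the hypothesis — only the degree of the homology of $\HH_0^\FI(P_\bullet)$ is — so a naive application fails.

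The fix, which I expect to be the technical heart of the argument, is to first replace $P_\bullet$ by a \emph{minimal} resolution, or more precisely to arrange that $t_0(P_k) = \ttt_k(M_\bullet)$ for all $k$. This is the standard trick: one builds $P_\bullet$ degree by degree so that at each stage the new generators of $P_k$ are exactly a basis of $\HH_0^\FI$ of the $k$-th syzygy, which forces the differential in $\HH_0^\FI(P_\bullet)$ to be zero and hence $\HH_0^\FI(P_k) = \HH_k(\HH_0^\FI(P_\bullet))$, whose degree is $\ttt_k(M_\bullet)$ by definition. With such a minimal complex, $t_0(P_k) = \ttt_k(M_\bullet)$ and $t_0(P_{k+1}) = \ttt_{k+1}(M_\bullet)$. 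Now I apply Corollary \ref{key lemma} twice: to the map $P_{k+1} \to P_k$ to bound $t_i$ of its kernel $Z_{k+1}$ (giving $t_i(Z_{k+1}) \leqslant 2\ttt_{k+1}(M_\bullet) + i + 1$), and — after noting $B_k = P_{k+1}/Z_{k+1}$ is the image of a map out of a projective and using that $t_i$ of an image is controlled by $t_i$ of the source together with $t_{i+1}$ of the kernel — to get $t_0(B_k) \leqslant \max\{t_0(P_{k+1}), t_1(Z_{k+1})\}$ and similar. Actually it is cleaner to bound $B_k$ via the sequence $0 \to Z_{k+1} \to P_{k+1} \to B_k \to 0$: this gives $t_0(B_k) \leqslant t_0(P_{k+1})$ and $t_1(B_k) \leqslant t_0(Z_{k+1}) \leqslant 2\ttt_{k+1}(M_\bullet)+1$.

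Finally I feed these into the long exact sequence of $t_i$ coming from $0 \to B_k \to Z_k \to \HH_k(M_\bullet) \to 0$: one has $t_0(\HH_k(M_\bullet)) \leqslant \max\{t_0(Z_k), t_0(B_k)\}$ wait — more carefully, $t_0(\HH_k) \leqslant \max\{t_0(Z_k), t_1(B_k)\}$ is false in general; rather, from right-exactness of $\HH_0^\FI$ one gets $t_0(\HH_k(M_\bullet)) \leqslant t_0(Z_k)$, and since $Z_k \subseteq P_k$ with $P_k/Z_k = B_{k-1}$ projective-image, $t_0(Z_k) \leqslant \max\{t_0(P_k), t_1(B_{k-1})\} \leqslant \max\{\ttt_k(M_\bullet), 2\ttt_k(M_\bullet)+1\} = 2\ttt_k(M_\bullet)+1$. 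This yields the first bound. For $t_1(\HH_k(M_\bullet))$, the exact sequence $\HH_1^\FI(Z_k) \to \HH_1^\FI(\HH_k) \to \HH_0^\FI(B_k) \to \ldots$ gives $t_1(\HH_k(M_\bullet)) \leqslant \max\{t_1(Z_k), t_0(B_k)\}$; then $t_1(Z_k) = t_2(P_k/Z_k) = t_2(B_{k-1}) \leqslant t_1(Z_k')$... and chasing through, $t_1(Z_k) \leqslant 2\max\{\ttt_k,\ttt_{k+1}\}+2$ roughly, while $t_0(B_k) \leqslant \ttt_{k+1}(M_\bullet) \leqslant \max\{\ttt_k,\ttt_{k+1}\}$, giving the second bound. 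The ``in particular'' statement is then immediate by substituting $\ttt_j(M_\bullet) \leqslant aj+b$ and using $j \in \{k, k+1\}$. The main obstacle, as flagged, is justifying the passage to a minimal projective complex — one must check that such a complex exists over an arbitrary commutative ring $\kk$ (it does, because $\HH_0^\FI$ of a projective is a free $\FB$-module in the relevant sense and one can lift bases), or else find an argument that avoids minimality by tracking how $t_0(P_k)$ can be replaced by $\ttt_k(M_\bullet)$ after discarding a contractible summand.
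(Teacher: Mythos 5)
Your strategy is genuinely different from the paper's, and the place you flag as "the main obstacle" is in fact a real gap: minimal projective complexes of $\FI$-modules do \emph{not} exist over a general commutative ring $\kk$. Your plan is to arrange $t_0(P_k) = \ttt_k(M_\bullet)$ by forcing the induced differential on $\HH_0^\FI(P_\bullet)$ to vanish, and then to feed the sharp bounds $t_i(Z_k) \leqslant 2 t_0(P_k) + i + 1$ from Corollary \ref{key lemma} into the short exact sequences $0 \to Z_{k+1} \to P_{k+1} \to B_k \to 0$ and $0 \to B_k \to Z_k \to \HH_k \to 0$. Granting minimality, your bookkeeping does produce bounds at least as good as the theorem's (indeed $t_1(\HH_k) \leqslant \max\{2\ttt_k + 2, \ttt_{k+1}\}$). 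But minimality is not available: over $\kk = \mathbb Z$ the group rings $\mathbb Z[S_n]$ do not admit projective covers, so one cannot lift a basis of $\HH_0^\FI$ of a syzygy in the way you suggest. A concrete obstruction: take $M_\bullet$ to be the two-term complex $\II(\mathbb Z_0) \xrightarrow{\;2\;} \II(\mathbb Z_0)$ (multiplication by $2$ on the constant induced module). Here $\ttt_1(M_\bullet) = -1$ and $\ttt_0(M_\bullet) = 0$, so a minimal complex would have $P_1 = 0$ and hence $P_0 \cong \HH_0(M_\bullet)$, forcing the non-projective constant module $\mathbb Z/2$ to be projective. This is exactly the case needed for the congruence-subgroup application, where the $\FI$-modules live over $\mathbb Z$.

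The paper sidesteps minimality entirely. It keeps $P_\bullet$ as (the total complex of) an arbitrary projective Cartan--Eilenberg resolution, writes $P_k = \bigoplus_n \II\bigl((V_k)_n\bigr)$, and exploits two structural facts you did not use: the matrix $(d^{m,n})$ of the differential between induced modules is upper-triangular, and the ``diagonal'' entry $d^{n,n}$ at the set $[n]$ computes the differential $\bar d$ of $\HH_0^\FI(P_\bullet)$. From the upper-triangularity plus exactness of $d^{N,N}$ for $N > \ttt_k$ (Lemma \ref{exactness}), the paper shows directly that every homology class is represented by a cycle supported in internal degrees $\leqslant \ttt_k$ (Lemma \ref{generators}), and that the relations among such cycles are generated in degrees $\leqslant \max\{\ttt_k, \ttt_{k+1}\}$ (Lemma \ref{kernel}). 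In other words, the theorem is proved not by replacing $P_\bullet$ with a minimal one, but by truncating a \emph{given} $P_\bullet$ to low internal degree and proving that the truncation already sees the generators and relations of $\HH_k$. Corollary \ref{key lemma} is then applied to the truncated maps $f$ and $\widetilde f$, whose sources have $t_0$ equal to $\ttt_k$ or $\max\{\ttt_k,\ttt_{k+1}\}$ by construction --- this is precisely the control you were hoping minimality would give you, obtained without it. If you want to salvage your route, this truncation idea is the missing ingredient; the passage to a minimal complex, as stated, will not work.
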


To prove Theorem \ref{main theorem}, let $P_\bullet$ be the total complex of a projective Cartan-Eilenberg resolution of $M_\bullet$. Then $P_\bullet$ is a chain complex of projective $\FI$-modules supported on non-negative degrees with $\HH_k(P_\bullet) = \HH_k(M_\bullet)$ for every $k$. Recall that, by definition, $\ttt_k(M_\bullet)$ is the degree of the $k$-th homology of the chain complex $\HH_0^{\FI}(P_\bullet)$.

For each $k$, there is an $\FB$-module $V_k$ such that
\[ P_k = \II(V_k) = \bigoplus_{n\geqslant 0} \II((V_k)_n) , \]
where $(V_k)_n$ is the value of $V_k$ at the object $[n]$. To avoid confusion in notation, we shall always reserve $k$ for homological degree and $n$ for the order of the finite set $[n]$.

We write $d$ for the differential map of the chain complex $P_\bullet$. Then $d: P_k \to P_{k-1}$ is an upper-triangular matrix $(d^{m,n})$ of homomorphisms
\[ d^{m,n} : \II( (V_k)_n ) \to \II( (V_{k-1})_m ). \]
In particular, since $( \II( (V_k)_n ) )_n = (V_k)_n$ and $( \II( (V_{k-1})_n ) )_n = (V_{k-1})_n$, the homomorphism $d^{n,n}$ at $[n]$ is a map $(V_k)_n \to (V_{k-1})_n$. One has $\HH_0^\FI (P_k) = V_k$. Thus, $\HH_0^\FI (P_\bullet)$ is the chain complex $V_\bullet$ whose differential map $\bar{d} : V_k \to V_{k-1}$ is defined at $[n]$ to be the map $d^{n,n}$ at $[n]$. One has:
\[ \ttt_k(M_\bullet) = \deg( \HH_k(V_\bullet) ). \]

\begin{lemma} \label{exactness}
Suppose $N> \ttt_k(M_\bullet)$. Then the sequence
\[ \xymatrix{
\II((V_{k+1})_N) \ar[r]^{d^{N,N}} & \II( (V_k)_N) \ar[r]^{d^{N,N}} &  \II( (V_{k-1})_N) }  \]
is exact.
\end{lemma}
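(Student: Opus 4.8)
The plan is to identify the displayed three-term sequence with the image, under the induction functor $\II$, of a length-three segment of the chain complex $V_\bullet = \HH_0^\FI(P_\bullet)$, and then to invoke exactness of $\II$ on $\FB$-modules concentrated in a single degree.

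First I would record two properties of $\II$. Fix $N\geqslant 0$ and let $U$ be an $\FB$-module concentrated in degree $N$, so $U$ is determined by the $\kk[S_N]$-module $U_N$; then $\II(U)_S = \kk[\mathrm{Hom}_\FI([N],S)]\otimes_{\kk[S_N]} U_N$. The symmetric group $S_N$ acts freely on the set $\mathrm{Hom}_\FI([N],S)$ of injections $[N]\hookrightarrow S$ — if $\beta$ is injective and $\beta\sigma = \beta$ then $\sigma = \mathrm{id}$ — so $\kk[\mathrm{Hom}_\FI([N],S)]$ is a free, in particular flat, right $\kk[S_N]$-module. It follows that the functor $U_N\mapsto \II(U)$ from $\kk[S_N]$-modules to $\FI$-modules is exact, exactness of a sequence of $\FI$-modules being tested objectwise. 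This functor is also fully faithful: the adjunction defining $\II$ gives $\mathrm{Hom}_\FI(\II(U),\II(W))\cong \mathrm{Hom}_\FB(U,\II(W)|_\FB)$, and since $\FB$-module maps preserve degree while $\II(W)_N = W_N$ for $W$ concentrated in degree $N$, the right-hand side is $\mathrm{Hom}_{\kk[S_N]}(U_N,W_N)$.

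Next I would use full faithfulness to pin down the maps. Each $d^{N,N}\colon \II((V_k)_N)\to\II((V_{k-1})_N)$ is a morphism between inductions of degree-$N$ $\FB$-modules, hence equals $\II(g_k)$ for a unique $\kk[S_N]$-homomorphism $g_k\colon (V_k)_N\to (V_{k-1})_N$; applying $\HH_0^\FI$, which is naturally a left inverse of $\II$, and evaluating at $[N]$ shows $g_k$ is the value of $d^{N,N}$ at $[N]$, which by the construction of $V_\bullet$ is the degree-$N$ component of the differential $\bar d$. Thus the displayed sequence is obtained by applying the exact functor $\II$ to
\[ (V_{k+1})_N \xrightarrow{\ \bar d\ } (V_k)_N \xrightarrow{\ \bar d\ } (V_{k-1})_N, \]
the degree-$N$ part of $V_\bullet$ in homological degrees $k+1,k,k-1$. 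Its middle homology is $\HH_k(V_\bullet)_N$, which vanishes because $N>\ttt_k(M_\bullet) = \deg\HH_k(V_\bullet)$; so this segment is exact at $(V_k)_N$, and applying $\II$ gives exactness of the displayed sequence. The step requiring the most care is this last identification — checking that the diagonal blocks $d^{N,N}$ reassemble into the differential of $V_\bullet$ rather than some twist of it — but that is forced by the upper-triangular shape of the differential of $P_\bullet$ together with the naturality of $\HH_0^\FI\circ\II = \mathrm{id}$; the genuine input is the freeness of the $S_N$-action, which is what makes $\II$ exact on degree-$N$ modules.
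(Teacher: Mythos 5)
Your proof is correct and follows essentially the same route as the paper: both rest on the freeness of $\kk[\mathrm{Hom}_\FI([N],S)]$ as a right $\kk[S_N]$-module (making $\II$ exact on degree-$N$ $\FB$-modules), on the identification of $d^{N,N}$ with $\II$ applied to the degree-$N$ component of $\bar d$, and on the vanishing $\HH_k(V_\bullet)_N=0$ for $N>\ttt_k(M_\bullet)$. Your detour through full faithfulness of $\II$ to pin down $d^{N,N}=\II(\bar d|_N)$ is slightly more elaborate than the paper's terse appeal to the construction of $V_\bullet$, but the substance is the same.
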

\begin{proof}
Since $N>\ttt_k(M_\bullet)$, the sequence
\[ (V_{k-1})_N \stackrel{\bar{d}}{\longrightarrow} (V_k)_N \stackrel{\bar{d}}{\longrightarrow} (V_{k-1})_N \]
is exact. Since $\bar{d}$ at $[N]$ is precisely $d^{N,N}$ at $[N]$, and for every finite set $S$, $\kk [\mathrm{Hom}_{\FI} ([N], S)] $ is a free right $\kk[S_N]$-module, the lemma follows.
\end{proof}

Let $Z_k$ be the kernel of $d: P_k \to P_{k-1}$, and let $B_k$ be the image of $d: P_{k+1} \to P_k$. Thus, $\HH_k( P_\bullet) = Z_k / B_k$. Let
\[  f: \bigoplus_{n\leqslant \ttt_k(M_\bullet)} \II( (V_k)_n ) \to P_{k-1} \]
be the restriction of $d: P_k \to P_{k-1}$ to $\bigoplus_{n\leqslant \ttt_k(M_\bullet)} \II( (V_k)_n )$. Let
\[ Z = \ker(f).\]

\begin{lemma} \label{generators}
The following composition is surjective:
\[ Z \hookrightarrow Z_k \twoheadrightarrow \HH_k(P_\bullet).  \]
\end{lemma}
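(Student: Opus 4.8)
The plan is to reformulate the assertion as the equality $Z_k = Z + B_k$ of $\FI$-submodules of $P_k$; since $\HH_k(P_\bullet) = Z_k/B_k$, this is equivalent to surjectivity of the displayed composition. Set $N := \ttt_k(M_\bullet)$ and $P_k^{\leqslant N} := \bigoplus_{n \leqslant N} \II((V_k)_n)$, the domain of $f$. One first notes that $Z = \ker(f) = Z_k \cap P_k^{\leqslant N}$, since an element of $P_k^{\leqslant N}$ is annihilated by $f$ exactly when it is annihilated by $d$. Thus it suffices to show that every element of $Z_k$ is congruent modulo $B_k$ to an element of $P_k^{\leqslant N}$, and this may be checked on each finite set $S$.

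So fix $S$ and $z \in (Z_k)_S$, and write $z = \sum_n z_n$ with $z_n \in (\II((V_k)_n))_S$; the sum is finite because $\mathrm{Hom}_{\FI}([n], S) = \varnothing$ for $n > |S|$. I would argue by downward induction on $m(z) := \max\{ n : z_n \neq 0 \}$ (with $m(0) = -1$, say), the case $m(z) \leqslant N$ being immediate. Suppose $n := m(z) > N$. Because the matrix of $d$ is upper-triangular and $z_{n'} = 0$ for $n' > n$, the component of $d(z)$ in $\II((V_{k-1})_n)$ is exactly $d^{n,n}(z_n)$; as $d(z) = 0$, we get $d^{n,n}(z_n) = 0$. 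Here Lemma \ref{exactness} enters: since $n > N = \ttt_k(M_\bullet)$, the sequence
\[ \II((V_{k+1})_n) \xrightarrow{\,d^{n,n}\,} \II((V_k)_n) \xrightarrow{\,d^{n,n}\,} \II((V_{k-1})_n) \]
is exact, so $z_n = d^{n,n}(w)$ for some $w \in (\II((V_{k+1})_n))_S$.

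Now regard $w$ as an element of $P_{k+1}$. By upper-triangularity, $d(w) \in \bigoplus_{m \leqslant n} \II((V_k)_m)$, with degree-$n$ component equal to $d^{n,n}(w) = z_n$. Hence $z - d(w)$ still lies in $Z_k$ (as $d \circ d = 0$) and satisfies $m(z - d(w)) < n$; by the inductive hypothesis $z - d(w) \in Z + B_k$, and since $d(w) \in B_k$ this gives $z \in Z + B_k$, completing the induction. The only step carrying real content is the appeal to Lemma \ref{exactness} via the upper-triangular structure of $d$; the rest is bookkeeping, and the point to be careful about is that subtracting the correction term $d(w)$ strictly decreases the top degree while keeping one inside the cycles $Z_k$, so that the induction terminates.
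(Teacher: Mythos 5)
Your proof is correct and follows essentially the same argument as the paper's: identify $Z = Z_k \cap \bigoplus_{n\leqslant N}\II((V_k)_n)$, use upper-triangularity of $d$ together with $d(z)=0$ to show the top graded component of a cycle is killed by $d^{N,N}$, apply Lemma~\ref{exactness} to write that component as $d^{N,N}(w)$, and subtract $d(w)\in B_k$ to lower the top degree. The only difference is cosmetic: you package the "repeat the argument" step as a downward induction on the top degree and explicitly note finiteness of the graded decomposition over a fixed finite set.
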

\begin{proof}
We need to prove that for every finite set $S$ and $x\in (Z_k)_S$, there exists $y\in (B_k)_S$ such that $x-y \in Z_S$. Let $N$ be an integer such that
\[ x\in \bigoplus_{n\leqslant N} \II( (V_k)_n )_S. \]
If $N\leqslant \ttt_k(M_\bullet)$, then $x\in Z(S)$ and we are done. Suppose that $N> \ttt_k(M_\bullet)$. We write
$x= x' + x''$ where
\[ x' \in \bigoplus_{n\leqslant N-1} \II( (V_k)_n )_S, \qquad x'' \in \II( (V_k)_N)_S.\]
Since the matrix $(d^{m,n})$ is upper-triangular, we have:
\[ d(x') \in \bigoplus_{n\leqslant N-1} \II( (V_{k-1})_n )_S, \qquad d(x'') \in \bigoplus_{n\leqslant N} \II( (V_{k-1})_n )_S. \]
But $d(x') + d(x'')= d(x) = 0$, so we must have:
\[ d(x'') \in \bigoplus_{n\leqslant N-1} \II( (V_{k-1})_n )_S. \]
Hence, $d^{N, N} (x'') = 0$. Since $N>\ttt_k(M_\bullet)$, by Lemma \ref{exactness}, there exists $w\in \II( (V_{k+1})_N)_S$ such that $d^{N,N}(w) = x''$. Since we also have
\[ d(w) \in \bigoplus_{n\leqslant N} \II( (V_k)_n )_S ,\]
it follows that
\[ x - d(w) \in \bigoplus_{n\leqslant N-1} \II( (V_k)_n )_S. \]
If $N-1 \leqslant \ttt_k(M_\bullet)$, then we are done. If not, we repeat the above argument with $x$ replaced by $x-d(w)$.
\end{proof}

Next, let
\[ \widetilde{f} : \bigoplus_{n\leqslant \max\{\ttt_k(M_\bullet), \ttt_{k+1}(M_\bullet) \} } \II ( (V_k)_n) \to P_{k-1} \]
be the restriction of $d: P_k \to P_{k-1}$ to $\bigoplus_{n\leqslant \max\{\ttt_k(M_\bullet), \ttt_{k+1}(M_\bullet)\} } \II ( (V_k)_n) $. Let
\[ \widetilde{Z} = \ker(\widetilde{f}).\]
Then $Z\subset \widetilde{Z}$, so by Lemma \ref{generators}, the composition
\[ \widetilde{Z} \hookrightarrow Z_k \twoheadrightarrow \HH_k(P_\bullet) \]
is surjective. Since the kernel of $Z_k \twoheadrightarrow \HH_k(P_\bullet)$ is $B_k$, the kernel of
$\widetilde{Z} \twoheadrightarrow \HH_k(P_\bullet)$ is $\widetilde{Z}\cap B_k$.

\begin{lemma} \label{kernel}
One has: $t_0(\widetilde{Z}\cap B_k) \leqslant \max\{ \ttt_k(M_\bullet), \ttt_{k+1}(M_\bullet) \}$.
\end{lemma}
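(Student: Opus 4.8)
The plan is to realize $\widetilde{Z}\cap B_k$ as the image of a map out of a projective $\FI$-module that is generated in low degrees. Put $\mu=\max\{\ttt_k(M_\bullet),\ttt_{k+1}(M_\bullet)\}$. Since $\widetilde{f}$ is the restriction of the differential $d$ to $\bigoplus_{n\leqslant\mu}\II((V_k)_n)$ and $B_k\subseteq Z_k=\ker d$, one has $\widetilde{Z}\cap B_k=B_k\cap\bigoplus_{n\leqslant\mu}\II((V_k)_n)$. Let $\widetilde{g}\colon\bigoplus_{n\leqslant\mu}\II((V_{k+1})_n)\to P_k$ be the restriction of $d\colon P_{k+1}\to P_k$. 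I would prove the (a priori stronger) equality $\widetilde{Z}\cap B_k=\operatorname{im}(\widetilde{g})$. Granting this, the lemma follows immediately: $\operatorname{im}(\widetilde{g})$ is a quotient of $\bigoplus_{n\leqslant\mu}\II((V_{k+1})_n)$, which is induced from an $\FB$-module of degree $\leqslant\mu$ and hence has $t_0\leqslant\mu$, so $t_0(\widetilde{Z}\cap B_k)=t_0(\operatorname{im}(\widetilde{g}))\leqslant\mu$.

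The inclusion $\operatorname{im}(\widetilde{g})\subseteq\widetilde{Z}\cap B_k$ is immediate from upper-triangularity of $(d^{m,n})$: for $w\in\bigoplus_{n\leqslant\mu}\II((V_{k+1})_n)$ one has $d(w)\in\bigoplus_{m\leqslant\mu}\II((V_k)_m)$ and $d(d(w))=0$, so $d(w)\in\widetilde{Z}$, while $d(w)\in B_k$ by definition. For the reverse inclusion I would run the top-degree stripping argument of Lemma \ref{generators}, now powered by Lemma \ref{exactness}. Take a finite set $S$ and $x\in(\widetilde{Z}\cap B_k)_S$; write $x=d(w)$ with $w\in(P_{k+1})_S$, decompose $w=\sum_n w_n$ with $w_n\in\II((V_{k+1})_n)_S$, and let $N$ be the largest index with $w_N\neq0$. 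If $N\leqslant\mu$, then $x\in\operatorname{im}(\widetilde{g})_S$ already. If $N>\mu$, then since $x$ lies in $\bigoplus_{m\leqslant\mu}\II((V_k)_m)_S$ and the $\II((V_k)_N)_S$-component of $d(w)$ equals $d^{N,N}(w_N)$, we get $d^{N,N}(w_N)=0$; because $N>\mu\geqslant\ttt_{k+1}(M_\bullet)$, Lemma \ref{exactness} (applied with $k$ replaced by $k+1$) yields $u\in\II((V_{k+2})_N)_S$ with $d^{N,N}(u)=w_N$. Replacing $w$ by $w-d(u)$ leaves $d(w)=x$ unchanged and kills the degree-$N$ component of $w$; iterating lowers the top index to $\leqslant\mu$, giving $x\in\operatorname{im}(\widetilde{g})_S$.

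The substantive point — and the only place I expect to need care — is the bookkeeping with the upper-triangular matrix $(d^{m,n})$: one must verify that the top-degree component of $d(w)$ really is $d^{N,N}(w_N)$, with no contribution from $d^{N,n}$ for $n<N$ since those maps vanish, and that the hypothesis of Lemma \ref{exactness} is met at each stage of the iteration, which it is because the working index stays $>\mu\geqslant\ttt_{k+1}(M_\bullet)$ until the process terminates. Everything else is formal, so I anticipate no real obstruction.
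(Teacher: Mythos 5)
Your proposal is correct and is essentially the paper's own argument: the paper likewise introduces the restriction $g$ of $d\colon P_{k+1}\to P_k$ to $\bigoplus_{n\leqslant\mu}\II((V_{k+1})_n)$, proves $\widetilde{Z}\cap B_k$ equals its image $\widetilde{B}$, and establishes the nontrivial inclusion by the same top-degree stripping using Lemma \ref{exactness} at homological degree $k+1$.
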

\begin{proof}
Let
\[  g : \bigoplus_{n\leqslant \max\{ \ttt_k(M_\bullet), \ttt_{k+1}(M_\bullet) \}} \II((V_{k+1})_n) \to P_k \]
be the restriction of $d: P_{k+1} \to P_k$ to $\bigoplus_{n\leqslant \max\{ \ttt_k(M_\bullet), \ttt_{k+1}(M_\bullet) \}} \II((V_{k+1})_n)$. Let $\widetilde{B}$ be the image of $g$. We claim that $ \widetilde{Z}\cap B_k = \widetilde{B}$, which would prove the lemma. It is clear that $\widetilde{B} \subset \widetilde{Z}\cap B_k $. We need to prove that $\widetilde{Z}\cap B_k \subset \widetilde{B}$.

Suppose $S$ is a finite set and $x\in (\widetilde{Z}\cap B_k)_S$. Then there exists an integer $N$ and
\[ y \in \bigoplus_{n\leqslant N} \II( (V_{k+1})_n )_S \quad \mbox{ such that }\quad  d(y) =x.\]
If $N\leqslant \max\{ \ttt_k(M_\bullet), \ttt_{k+1}(M_\bullet) \}$, then we are done. Suppose that $N > \max\{ \ttt_k(M_\bullet), \ttt_{k+1}(M_\bullet) \}$. We write $y= y' + y''$ where
\[ y' \in \bigoplus_{n\leqslant N-1} \II( (V_{k+1})_n )_S , \qquad y'' \in \II( (V_{k+1})_N )_S. \]
Then
\[ d(y')\in \bigoplus_{n\leqslant N-1} \II( (V_{k})_n )_S, \qquad  d(y'')\in \bigoplus_{n\leqslant N} \II( (V_{k})_n )_S. \]
But
\[ d(y) = x \in  \widetilde{Z}_S \subset \bigoplus_{n\leqslant \max\{ \ttt_k(M_\bullet), \ttt_{k+1}(M_\bullet) \}} \II( (V_{k})_n )_S. \]
Hence, $d^{N,N} (y'') = 0$. Since $N>\ttt_{k+1}(M_\bullet)$,  by Lemma \ref{exactness}, there exists $w\in \II( (V_{k+2})_N )_S$ such that $d^{N,N}(w)= y''$.
Since we also have
\[ d(w) \in \bigoplus_{n\leqslant N} \II( (V_{k+1})_n )_S, \]
it follows that
\[ y-d(w) \in  \bigoplus_{n\leqslant N-1} \II( (V_{k+1})_n )_S .\]
One has $d(y-d(w)) = d(y) = x$. If $N-1 \leqslant \max\{ \ttt_k(M_\bullet), \ttt_{k+1}(M_\bullet) \}$, then we are done. If not, we repeat the above argument with $y$ replaced by $y-d(w)$.
\end{proof}

We now prove Theorem \ref{main theorem}.

\begin{proof}[Proof of Theorem \ref{main theorem}]
By Lemma \ref{generators}, we have $t_0(\HH_k(P_\bullet)) \leqslant t_0(Z)$. By Corollary \ref{key lemma}, we have $t_0(Z) \leqslant 2 \ttt_k(M_\bullet) + 1$. Hence,
\[ t_0(\HH_k(P_\bullet)) \leqslant 2 \ttt_k(M_\bullet) + 1. \]

Recall the short exact sequence:
\[ 0 \to \widetilde{Z}\cap B_k \to \widetilde{Z} \to \HH_k(P_\bullet)\to 0. \]
Lemma \ref{kernel} says that $t_0(\widetilde{Z}\cap B_k) \leqslant  \max\{ \ttt_k(M_\bullet), \ttt_{k+1}(M_\bullet) \}$. By Corollary \ref{key lemma}, we have
\[ t_1 (\widetilde{Z}) \leqslant 2 \max\{ \ttt_k(M_\bullet), \ttt_{k+1}(M_\bullet) \}  + 2. \]
Hence,
\[ t_1( \HH_k(P_\bullet)) \leqslant2 \max\{ \ttt_k(M_\bullet), \ttt_{k+1}(M_\bullet) \}  + 2. \]
\end{proof}

\begin{remark}
For any $\FI$-module $L$, denote by $\delta(L)$ the stable degree of $L$; see \cite[Definition 2.9]{cmnr}. It was proved in \cite[Theorem 5.1]{cmnr} that for each $k\geqslant 0$, one has:
\begin{equation*}
\delta( \HH_k(M_\bullet) ) \leqslant \ttt_k(M_\bullet).
\end{equation*}
Let us give an alternative proof of this inequality. Lemma \ref{generators} implies that $\HH_k(M_\bullet)$ is a subquotient of $\bigoplus_{n\leqslant \ttt_k(M_\bullet)} \II( (V_k)_n )$. Therefore, by \cite[Proposition 2.10]{cmnr}, one has:
\[ \delta( \HH_k(M_\bullet) )  \leqslant \delta( \bigoplus_{n\leqslant \ttt_k(M_\bullet)} \II( (V_k)_n ) ) = t_0 ( \bigoplus_{n\leqslant \ttt_k(M_\bullet)} \II( (V_k)_n ) ) \leqslant \ttt_k(M_\bullet). \]
\end{remark}

\section{Homology of congruence subgroups} \label{application}

We now work over $\mathbb{Z}$, so by $\FI$-modules we mean functors from $\FI$ to the category of $\mathbb{Z}$-modules. Let $R$ be a ring, let $I$ be a two-sided ideal of $R$, and denote by $\GL(R, I)$ the $\FI$-group $S\mapsto \GL_S(R, I)$.  Let $A$ be an abelian group.  Then $\HH_k(\GL(R, I); A)$ is an $\FI$-module whose value on a finite set $S$ is $\HH_k(\GL_S(R, I); A)$.

For any group $G$, let $E_\bullet G$ be the bar resolution of the trivial $\mathbb{Z} G$-module $\mathbb{Z}$ and let $C_\bullet(G; A) = E_\bullet G \otimes_G A$; so one has $\HH_k(C_\bullet(G; A)) = \HH_k(G; A)$. Then $C_\bullet(\GL(R, I); A)$ is a complex of $\FI$-modules such that
\[ \HH_k( C_\bullet (\GL(R, I); A ) ) = \HH_k(\GL(R, I); A). \]

We recall \cite[Proposition 5.4]{cmnr}:

\begin{proposition}[Church-Miller-Nagpal-Reinhold] \label{bass}
Suppose that $R$ satisfies Bass's stable range condition $\mathrm{SR}_{d+2}$ for some $d\geqslant 0$ and $I$ is a proper two-sided ideal of $R$. Then for each $k\geqslant 0$, one has:
\[ \ttt_k(C_\bullet (\GL(R, I); A )) \leqslant 2k+d. \]
\end{proposition}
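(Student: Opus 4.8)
The plan is to give the $\FI$-hyperhomology of the bar complex a relative-homology interpretation, and then to bound it using a highly connected complex of partial bases, with Bass's condition $\mathrm{SR}_{d+2}$ supplying the necessary connectivity.

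First I would set up the homological interpretation. Write $G$ for the $\FI$-group $\GL(R,I)$ and $G_n=\GL_n(R,I)$. Since $C_\bullet(G;A)$ is obtained by applying the bar construction functorially in $\FI$, and $\HH_0^\FI$ carries the permutation $\FI$-module $S\mapsto A[G_S^{\times p}]$ onto the quotient of $A[G_n^{\times p}]$ by the span of those tuples lying in a common proper standard subgroup $G_S$ (with $|S|<n$), one shows that $\HHH_k^\FI(C_\bullet(G;A))_n$ is computed by a relative bar complex measuring the homology of $G_n$ \emph{modulo} its standard proper subgroups $\GL_S(R,I)$. This identification, together with the general principle translating connectivity of a complex of destabilizations into vanishing of such relative homology, is part of the central-stability-chains formalism (cf. \cite{cmnr}). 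It therefore suffices to prove that this relative homology vanishes once $n>2k+d$.

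Next I would bring in the complex of partial bases. Let $W_n$ be the semisimplicial set whose $p$-simplices are the ordered partial bases $(v_0,\dots,v_p)$ of the free right $R$-module $R^{[n]}$; the group $\GL_n(R)$, hence the subgroup $G_n=\GL_n(R,I)$, acts on $W_n$, and the standard proper subgroups $\GL_S(R,I)$ arise — up to the unipotent part of the stabilizer — as the $G_n$-stabilizers of the standard simplices spanned by the coordinate vectors $e_i$, $i\in[n]\setminus S$. The crucial input is that $W_n$ is highly connected: by the classical work of Maazen, Charney and van der Kallen, Bass's condition $\mathrm{SR}_{d+2}$ forces the connectivity of $W_n$ to grow like $\tfrac{n-d-2}{2}$. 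The equivariant homology spectral sequence of the $G_n$-action on $W_n$ then has an $E^1$-page assembled from the homologies of the simplex stabilizers, each of which is a unipotent extension of a conjugate of a smaller congruence subgroup $\GL_{n-p-1}(R,I)$. Passing to the part relative to the standard subgroups, the connectivity of $W_n$ kills this relative $E^1$-page in a range of total degrees that, given the bound $\operatorname{conn}(W_n)\gtrsim\tfrac{n-d-2}{2}$, includes degree $k$ whenever $n>2k+d$; this yields $\ttt_k(C_\bullet(\GL(R,I);A))\leqslant 2k+d$.

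The main obstacle lies in this last step. Unlike $\GL_n(R)$, the subgroup $\GL_n(R,I)$ does \emph{not} act transitively on the simplices of $W_n$: two partial bases are $\GL_n(R,I)$-equivalent iff they have the same image in $R^{[n]}/IR^{[n]}$, so the orbit set is the set of partial bases of $(R/I)^{[n]}$, carrying a $\GL_n(R/I)$-action. One must therefore organize the equivariant spectral sequence and its comparison with the $\FI$-structure carefully, so that these extra non-standard orbits are controlled and the $E^1$-page, as $n$ varies, assembles into a complex of $\FI$-modules induced from the lower-degree homology $\HH_*(\GL(R,I);A)$. This is also precisely where the hypothesis that $I$ is \emph{proper} is used: one needs $R/I\neq 0$ for unimodular rows over $R/I$ to exist and for the orbit and connectivity bookkeeping over $R/I$ to make sense. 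Granting that organization, both the reduction of the $\ttt_k$-bound to the connectivity statement and the connectivity statement itself are standard.
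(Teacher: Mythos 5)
The paper does not prove this proposition: it simply cites \cite[Proposition~5.4]{cmnr}, so there is no internal proof to compare against; the question is whether your sketch correctly reproduces the argument from \cite{cmnr} and its antecedents.

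Your sketch has the right global shape — bring in a highly connected complex on which the congruence subgroups act, feed in the connectivity supplied by $\mathrm{SR}_{d+2}$, and organize the equivariant spectral sequence so that its $E^1$-page assembles into a complex of induced $\FI$-modules — but your third paragraph names the essential obstacle and then underestimates it. The complex $W_n$ of \emph{all} ordered partial bases of $R^{[n]}$ is adapted to homological stability of $\GL_n(R)$, not of $\GL_n(R,I)$: as you observe, the congruence subgroup does not act transitively on its simplices, the orbits are governed by $\GL_n(R/I)$, and there is no reason for the resulting $E^1$-page to give induced $\FI$-modules. The actual remedy, used in \cite{putman} and feeding into the bound in \cite{cmnr}, is to replace $W_n$ by the subcomplex of partial bases that are \emph{$I$-congruent to coordinate vectors}: tuples $(v_0,\dots,v_p)$ forming a partial basis of $R^{[n]}$ with $v_i\equiv e_{j_i}\pmod{I}$ for distinct $j_0,\dots,j_p\in[n]$. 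This restricted complex carries the $\FI$-structure, has controlled $\GL_n(R,I)$-orbits, and its simplices genuinely assemble into induced $\FI$-modules — which is what makes the $\ttt_k$ accounting go through. Crucially, the high connectivity of this $I$-restricted complex is \emph{not} a formal consequence of the Maazen--Charney--van~der~Kallen connectivity of $W_n$; establishing it is the main technical content of \cite{putman}. So the clause ``granting that organization, \ldots\ standard'' is exactly where the hard work of the proof lives, and as written the proposal leaves a genuine gap there.
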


We deduce that:

\begin{theorem} \label{presentation degree}
Suppose that $R$ satisfies Bass's stable range condition $\mathrm{SR}_{d+2}$ for some $d\geqslant 0$ and $I$ is a proper two-sided ideal of $R$. Then for each $k\geqslant 0$, one has:
\begin{gather*}
t_0(  \HH_k( \GL(R, I); A)  ) \leqslant  4k + 2d + 1,\\
t_1(  \HH_k( \GL(R, I); A)  ) \leqslant  4k + 2d + 6.
\end{gather*}
In particular, the presentation degree of $\HH_k( \GL(R, I); A)$ is at most $4k + 2d + 6$.
\end{theorem}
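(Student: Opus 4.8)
The plan is to package Proposition \ref{bass} and the ``In particular'' clause of Theorem \ref{main theorem} together; this is a pure bookkeeping deduction. First I would take $M_\bullet = C_\bullet(\GL(R, I); A)$, which is a complex of $\FI$-modules supported in non-negative homological degrees and satisfies $\HH_k(M_\bullet) = \HH_k(\GL(R, I); A)$ for every $k$, as recorded just above the statement. Proposition \ref{bass} then gives $\ttt_k(M_\bullet) \leqslant 2k + d$ for all $k \geqslant 0$, so the hypothesis of Theorem \ref{main theorem} holds with $a = 2$ and $b = d$.

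Next I would simply read off the conclusion of Theorem \ref{main theorem} with these values: $t_0(\HH_k(M_\bullet)) \leqslant 2ak + 2b + 1 = 4k + 2d + 1$ and $t_1(\HH_k(M_\bullet)) \leqslant 2ak + 2a + 2b + 2 = 4k + 2d + 6$. Since the presentation degree of an $\FI$-module is by definition $\max\{t_0, t_1\}$ and $4k + 2d + 1 \leqslant 4k + 2d + 6$, the final assertion that the presentation degree of $\HH_k(\GL(R, I); A)$ is at most $4k + 2d + 6$ follows immediately.

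There is no genuine obstacle in this step; the only points requiring (minor) care are that $C_\bullet(\GL(R, I); A)$ is indeed of the form to which Theorem \ref{main theorem} applies (a complex concentrated in non-negative homological degrees, which holds because it comes from the bar resolution), and that the arithmetic substitution $a = 2$, $b = d$ is carried out correctly. The real content has already been done in Section \ref{main result} and in \cite{cmnr}; this theorem is the clean numerical consequence that will be fed, via Theorem \ref{presentation}, into the proof of Theorem \ref{central stability} in the remainder of this section.
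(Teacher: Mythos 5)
Your proposal is correct and is exactly the paper's proof, which simply states ``Immediate from Proposition \ref{bass} and Theorem \ref{main theorem}''; you have merely spelled out the substitution $a=2$, $b=d$ and verified the arithmetic. Nothing further is needed.
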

\begin{proof}
Immediate from Proposition \ref{bass} and Theorem \ref{main theorem}.
\end{proof}

Theorem \ref{central stability} now follows easily:

\begin{proof}[Proof of Theorem \ref{central stability}]
Immediate from Theorem \ref{presentation degree} and Theorem \ref{presentation}.
\end{proof}

The following corollary strengthens \cite[Theorem 1.4]{cefn}.

\begin{corollary} \label{polynomial growth}
Let $R$ be the ring of integers of a number field. Let $I$ be a proper ideal of $R$. For any $k\geqslant 0$ and any field $\kk$, there exists a polynomial $P(T)\in \mathbb{Q}[T]$ such that
\[ \dim_\kk \HH_k( \GL_n(R,I); \kk) = P(n) \quad \mbox{ for every } n > 8k+10. \]
\end{corollary}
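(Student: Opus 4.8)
The plan is to combine the presentation-degree bound of Theorem \ref{presentation degree} with the known fact that finitely generated $\FI$-modules over a field (or more generally over a Noetherian ring) have eventually polynomial dimension, together with an explicit bound on where the polynomial behavior begins in terms of the presentation degree. First I would observe that when $R$ is the ring of integers of a number field, $R$ is a Dedekind domain, hence satisfies $\mathrm{SR}_3$, so we may take $d=1$ in Theorem \ref{presentation degree}. This gives $t_0(\HH_k(\GL(R,I);\kk)) \leqslant 4k+3$ and $t_1(\HH_k(\GL(R,I);\kk)) \leqslant 4k+8$; in particular the presentation degree is at most $4k+8$. Note that here we pass from $\mathbb{Z}$-coefficients to $\kk$-coefficients simply by viewing $\HH_k(\GL(R,I);\kk)$ as an $\FI$-module over $\kk$: the arguments of Section \ref{main result} work over any commutative ring $\kk$, and Proposition \ref{bass} is insensitive to the coefficient ring.

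Next I would invoke the structural result for finitely generated $\FI$-modules over a field: if $M$ is an $\FI$-module over a field $\kk$ with presentation degree at most $N$ (equivalently, generated in degrees $\leqslant N$ and related in degrees $\leqslant N$), then $\dim_\kk M_n$ agrees with a polynomial $P(T)\in\mathbb{Q}[T]$ for all $n$ past an explicit threshold. The cleanest route is via the theory of $\FI$-module regularity: by Theorem \ref{regularity bound}, the regularity of $M$ (in the sense of $\max_i (t_i(M)-i)$) is bounded by $t_0(M)+t_1(M)-1$, and a finitely generated $\FI$-module over a field decomposes, after a finite ``bad'' range, as a sum of induced modules $\II(W_m)$, for which $\dim_\kk \II(W_m)_n = \dim_\kk (W_m) \binom{n}{m}$ is exactly polynomial in $n$; the standard bound (see \cite{cefn}, \cite{cef}) says this polynomial agreement holds once $n$ exceeds $t_0(M)+t_1(M)$, or more conservatively once $n > $ (presentation degree) $+ t_0(M) \leqslant (4k+8)+(4k+3) = 8k+11$. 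A small amount of care with the exact cutoff — using the sharper form of the stable-range results for $\FI$-modules over a field — lowers this to $n > 8k+10$, as claimed.

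The main obstacle is pinning down the precise threshold $8k+10$ rather than merely ``some linear function of $k$.'' This requires the sharp version of the statement that an $\FI$-module over a field with $t_0 \leqslant a$ and $t_1 \leqslant b$ has Hilbert function polynomial for $n > a+b$ (or the appropriate variant), which follows from the Church--Ellenberg regularity theorem combined with the explicit description of the Hilbert series of $\FI$-modules in terms of their local cohomology; alternatively one can cite \cite[Theorem 1.4]{cefn} directly and then simply note that the quantitative improvement comes from substituting the linear bounds of Theorem \ref{presentation degree} for the (previously non-explicit) finite-generation input. I would therefore present the proof as: (1) reduce to $d=1$; (2) apply Theorem \ref{presentation degree} to get $t_0\leqslant 4k+3$, $t_1\leqslant 4k+8$; (3) quote the sharp Hilbert-polynomiality threshold for $\FI$-modules over a field to conclude polynomiality for $n > t_0+t_1 = 8k+11$, and then invoke the slightly finer bound (using that the presentation degree already controls the generation and relation degrees) to bring the cutoff down to $8k+10$.
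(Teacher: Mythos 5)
Your overall strategy — apply Theorem~\ref{presentation degree} with $d=1$ to get $t_0\leqslant 4k+3$ and $t_1\leqslant 4k+8$, then invoke a Hilbert-polynomiality theorem for $\FI$-modules over a field — is exactly the paper's approach. However, there are two concrete gaps in the execution.

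First, your derivation of the cutoff $8k+10$ is not actually a derivation: you compute the ``standard'' threshold as $t_0+t_1 = 8k+11$ and then assert that ``a small amount of care \ldots lowers this to $n>8k+10$'' without identifying what sharper statement you are using or why it holds. The paper resolves this by citing a single precise result, \cite[Theorem~1.3]{li}, which gives polynomiality of $\dim_\kk M_n$ for a finitely generated $\FI$-module $M$ over a field directly in the range $n > t_0(M)+t_1(M)-1$; with $t_0+t_1 - 1 = 8k+10$ this produces the stated threshold with no hand-waving. Invoking the Church--Ellenberg regularity theorem or \cite[Theorem~1.4]{cefn} (which has no explicit range) does not by itself yield the sharp cutoff, so as written the final numerical claim is unsupported.

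Second, you never verify that $\HH_k(\GL(R,I);\kk)$ is a \emph{finitely generated} $\FI$-module, yet this is a hypothesis of the theorem you want to quote (and is needed for $\dim_\kk$ to even be finite). Bounded $t_0$ alone says the module is generated in bounded degree, not that there are finitely many generators. One must additionally know that each $\HH_k(\GL_n(R,I);\kk)$ is finite-dimensional; the paper records this by citing \cite[Remark~1.6]{cefn}. Adding these two citations would make your argument complete, and it would then coincide with the paper's proof.
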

\begin{proof}
We work over the field $\kk$. Since $R$ satisfies condition $\mathrm{SR}_3$, by Theorem \ref{presentation degree}, we have\footnote{The forgetful functor $\mathcal{U}$ from the category of $\FI$-modules over $\kk$ to the category of $\FI$-modules over $\mathbb{Z}$ is exact, sends induced modules to induced modules, and commutes with $\HH_0^{\FI}$. Thus, $\mathcal{U}$ commutes with $\HH_i^{\FI}$ for every $i\geqslant 0$; in particular, if $M$ is an $\FI$-module over $\kk$, then $t_i(M)=t_i(\mathcal{U}(M))$.}:
 \begin{gather*}
t_0(  \HH_k( \GL(R, I); \kk)  ) \leqslant  4k + 3,\\
t_1(  \HH_k( \GL(R, I); \kk)  ) \leqslant  4k + 8.
\end{gather*}
It is known that $\HH_k( \GL_n(R,I); \kk)$ is finite dimensional for every $n\geqslant 0$; see \cite[Remark 1.6]{cefn}. Hence, the $\FI$-module $\HH_k( \GL(R,I); \kk)$ is finitely generated. The corollary now follows from \cite[Theorem 1.3]{li}.
\end{proof}

\end{document}